\newtheorem{theorem}{Theorem}
\newtheorem{lemma}{Lemma}
\newtheorem{corollary}{Corollary}
\DeclareMathOperator{\disc}{disc}
\DeclareMathOperator{\supp}{supp}
\def\lc{\left\lceil}   
\def\rc{\right\rceil}
\title{Coloring general Kneser graphs and hypergraphs via high-discrepancy hypergraphs}
\author{J{\'o}zsef Balogh, Danila Cherkashin and Sergei Kiselev}
\date{March 2018}
\author{J\'ozsef Balogh\footnote{Department of Mathematical Sciences, University of Illinois at Urbana-Champaign, IL, USA, and Moscow Institute of Physics and Technology, 9 Institutskiy per., Dolgoprodny, Moscow Region, 141701, Russian Federation.}, 
Danila Cherkashin\footnote{Chebyshev Laboratory, St.~Petersburg State University, 14th Line V.O., 29B, Saint Petersburg 199178 Russia; 
Moscow Institute of Physics and Technology, Lab of advanced combinatorics and network applications, Institutsky lane 9, Dolgoprudny, Moscow region, 141700, Russia; National Research University Higher School of Economics, Soyuza Pechatnikov str., 16, St. Petersburg, Russian
Federation}, 
Sergei Kiselev\footnote{Moscow Institute of Physics and Technology, Lab of advanced combinatorics and network applications, Institutsky lane 9, Dolgoprudny, Moscow region, 141700, Russia.}}
\begin{document}

\maketitle

\begin{abstract}
We suggest a new method on coloring generalized Kneser graphs based on hypergraphs with high discrepancy and small number of edges.
The main result is providing a proper coloring of $K(n, n/2-t,s)$ in $(4+o(1)) (s+t)^2$ colors, which is produced by Hadamard matrices.
Also, we show that for colorings by independent set of a natural type, this result is the best possible up to a multiplicative constant.

Our method extends to Kneser hypergraphs as well.

\end{abstract}

\section{Introduction}

Let $K(n,k,s)$ be the generalized Kneser graph, i.e.~a graph with the vertex set $\binom{[n]}{r}$ and the edges connecting all pairs of vertices with intersection smaller than $s$, where $[n] = \{1, \ldots, n\}$.
Denote by $J(n,k,s)$  the generalized Johnson graph, i.e.~the graph with the same vertex set $\binom{[n]}{r}$ and the edges connecting all pairs of vertices with intersection exactly $s$.

These graphs are quite popular objects in combinatorics. 
The chromatic number of the generalized Kneser graph were studied by Frankl and F{\"u}redi~\cite{frankl1985chromatic,frankl1986extremal} for fixed $k$ and $s$. 
Diameters of $K(n,k,s)$ and  $J(n,k,s)$ are computed in~\cite{chen2008diameter} and~\cite{agong2018girth} respectively.

Bobu and Kupriyanov~\cite{bobu2016chromatic} studied the chromatic number of $J(n,n/2,s)$ for small values of $s$.
They got the following results.
\begin{theorem}
\label{stupid}
For every $s < n/2$ we have
\[
s + 2 \leq \chi \left [J \left (n, \frac{n}{2}, s \right ) \right ] \leq 2\binom{2s+1}{s+1}.
\]
\end{theorem}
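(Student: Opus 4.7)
The plan is to prove the two bounds independently; the lower bound reduces to Lov\'asz's theorem via a graph homomorphism, while the upper bound needs a direct coloring construction.

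For the lower bound, I would exhibit a homomorphism $\phi: K(n-s,\, n/2-s) \to J(n, n/2, s)$. Fix any $s$-subset $F\subset[n]$ and set $\phi(X) = X \cup F$ for $X\in \binom{[n]\setminus F}{n/2-s}$. Then $|\phi(X)| = (n/2-s) + s = n/2$, and for disjoint $X,X'\subset[n]\setminus F$ one has $\phi(X)\cap\phi(X') = F$, which has size exactly $s$. Thus edges of the Kneser graph map to edges of $J$, which is exactly what is needed for a homomorphism in this setting. Since $\chi\bigl(K(n-s,n/2-s)\bigr) = (n-s) - 2(n/2-s) + 2 = s+2$ by Lov\'asz, we conclude $\chi[J(n,n/2,s)] \ge s+2$.

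For the upper bound, I would fix the window $Y = [2s+1]$ (which fits since $s<n/2$ forces $n\ge 2s+2$) and color each $A\in\binom{[n]}{n/2}$ by a pair in $\{0,1\}\times\binom{Y}{s+1}$. If $|A\cap Y|\ge s+1$, assign $c(A)=(0,S)$ where $S$ is the lex-smallest $(s+1)$-subset of $A\cap Y$; otherwise $|Y\setminus A|\ge s+1$ and we set $c(A)=(1,T)$ with $T$ the lex-smallest $(s+1)$-subset of $Y\setminus A$. The two cases are exhaustive, and the total number of colors is exactly $2\binom{2s+1}{s+1}$. The independence check is then short: two sets $A,B$ sharing a Type-$0$ color $S$ both contain $S$, so $|A\cap B|\ge s+1>s$; two sets sharing a Type-$1$ color $T$ both lie inside $[n]\setminus T$, a ground set of size $n-s-1$, whence $|A\cap B|\ge n/2+n/2-(n-s-1)=s+1>s$. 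Either way $|A\cap B|\ne s$, so no edge of $J$ is monochromatic.

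The main conceptual hurdle is guessing the right coloring for the upper bound: alone, the rule "$A$ contains a fixed $(s+1)$-set" only handles half the family, and alone, the dual rule "$A$ avoids a fixed $(s+1)$-set" would seem too weak. The balancing observation is that choosing $|Y|=2s+1$ makes the avoidance case equally strong, because two $n/2$-sets inside a universe of size $n-s-1$ are forced by inclusion-exclusion to meet in at least $s+1$ elements. Once this symmetric structure is in hand, the bookkeeping to get exactly $2\binom{2s+1}{s+1}$ is automatic, and the lower bound is routine from the Lov\'asz machinery.
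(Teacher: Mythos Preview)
Your argument is correct, and both halves match the paper's approach. For the lower bound the paper simply cites Lov\'asz, and your homomorphism $X\mapsto X\cup F$ from $K(n-s,n/2-s)$ is the standard way to make that citation precise. For the upper bound the paper does not give a direct coloring but observes that the bound is the special case $t=0$ of Lemma~\ref{main} applied to the complete hypergraph on $[2s+1]$; unpacking that lemma with $(s+1)$-element edges $e\subset[2s+1]$ yields exactly your Type-$0$ classes $\{A: e\subset A\}$ and Type-$1$ classes $\{A: e\cap A=\emptyset\}$, and your lex-smallest rule is just a concrete way of selecting one of the several valid colors a vertex may receive. So the only difference is presentational: you give the coloring explicitly, while the paper packages the same construction inside its discrepancy framework.
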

The lower bound immediately follows from the classical result of Lov{\'a}sz~\cite{lovasz1978kneser} on the Kneser graphs. It turns out that the upper bound is a particular case of our Lemma~\ref{main} with $t = 0$ and the complete hypergraph
\[
H = \left ( [2s+1], \binom{[2s+1]}{s} \right ).
\]
In this paper we improve the upper bound in Theorem~\ref{stupid} to a quadratic function  in $s$ for  $s = O(\sqrt n)$ using high-discrepancy hypergraphs. 
When $s \geq \sqrt{n}$ then we have the chromatic number grows as $n\cdot e^{(2+o(1))s^2/n}$ which simplifies to $e^{(2+o(1))s^2/n}$
when $n \ln n = o(s^2)$.
The latter again corresponds to discrepancy results (see Section~\ref{more}).

For every subset $A \subset [n]$ size of at least $s$ we define the corresponding independent set in $K (n,r,s)$ (usually it is called Frankl's set):
\[
I_A := \left \{v \in \binom{[n]}{r} \ \left  |  \ |v \cap A| \geq \frac{|A| + s}{2} \right . \right \}. 
\]
The complete intersection theorem of Ahlswede--Khachatryan~\cite{ahlswede1997complete} states that the independence number of $K (n,r,s)$ is always realized on some $I_A$ 
(they give precise formulation what $I_A$ is the biggest depending on $n$, $r$ and $s$).

Now define the \textit{F-chromatic number} as the chromatic number, that uses only coloring by Frankl's sets. 
Obviously, $\chi_{F} \geq \chi$ and
\[
\chi \left [J \left (n, \frac{n}{2}, s \right ) \right ] \leq \chi \left [K \left (n, \frac{n}{2}, s + 1 \right ) \right ].
\]

Our the first main theorem is the following. 
\begin{theorem}
\label{union} The following holds
\begin{itemize}
    \item [(i)] if $s = \sqrt n/2$ then
    \[
    \chi_F \left [K \left (n, \frac{n}{2}, s \right ) \right ] = \Theta (s^2);
    \]
    \item [(ii)] if $\sqrt n/2 \leq  s \leq O(\sqrt{n \ln n})$ then there is a constant $c > 0$ such that
    \[
    n \cdot e^{cs^2/n} \leq \chi_F \left [K \left (n, \frac{n}{2}, s \right ) \right ] \leq n\cdot e^{(2+o(1))s^2/n};
    \]
    \item [(iii)] if $\sqrt{n \ln n} \ll s$ then
    \[
    \chi \left [K \left (n, \frac{n}{2}, s \right ) \right ] = e^{(2+o(1))s^2/n}.
    \]
\end{itemize}
\end{theorem}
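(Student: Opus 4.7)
The plan is to reinterpret coverings of $K(n,n/2,s)$ by Frankl sets as a hypergraph discrepancy problem and then apply Lemma~\ref{main}. Note that a Frankl set $I_A$ contains $v\in\binom{[n]}{n/2}$ precisely when $|v\cap A|-|A|/2\geq s/2$; allowing both $I_A$ and $I_{[n]\setminus A}$ (which doubles the color count at worst) turns this into the symmetric condition $\bigl||v\cap A|-|A|/2\bigr|\geq s/2$. Writing a balanced $v$ as a zero-sum sign vector $w_v\in\{\pm 1\}^n$ and each $A$ as $c_A\in\{\pm 1\}^n$, a direct computation gives $\langle w_v,c_A\rangle=4(|v\cap A|-|A|/2)$, so the task reduces to finding a hypergraph $H$ on $[n]$ with as few edges as possible such that every zero-sum $w\in\{\pm 1\}^n$ has $|\langle w,c_A\rangle|\geq 2s$ for some $A\in H$.

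For part (i) I would let $c_1,\dots,c_n$ be the columns of an $n\times n$ Hadamard matrix (rounding $n$ to the nearest admissible order if needed). These form an orthogonal basis, so Parseval yields $\sum_i\langle w_v,c_i\rangle^2=n\|w_v\|_2^2=n^2$, and some column must satisfy $|\langle w_v,c_i\rangle|\geq\sqrt n$, which equals $2s$ exactly when $s=\sqrt n/2$. Lemma~\ref{main} then produces $\chi_F\leq 2n=O(s^2)$. For parts (ii) and (iii) I would instead draw $N$ independent uniform random $A_i\in\binom{[n]}{n/2}$. A hypergeometric tail bound gives $\Pr[|v\cap A_1|\geq n/4+s/2]\geq p=e^{-(2+o(1))s^2/n}$ for every fixed $v$, so $\Pr[\text{no }A_i\text{ covers }v]\leq(1-p)^N\leq e^{-Np}$, and a union bound over the $\binom{n}{n/2}\leq 2^n$ balanced subsets shows that $N=n\cdot e^{(2+o(1))s^2/n}$ suffices. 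This yields $\chi_F\leq n\cdot e^{(2+o(1))s^2/n}$, and the inequality $\chi\leq\chi_F$ transfers it to (iii), where the prefactor $n$ is absorbed into $e^{(2+o(1))s^2/n}$ once $s^2/n\gg\ln n$.

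For the lower bound of (iii), the Ahlswede--Khachatryan theorem identifies $\alpha(K(n,n/2,s))=\max_A|I_A|$, and a sharp hypergeometric tail bound gives $|I_A|/\binom{n}{n/2}\leq e^{-(2-o(1))s^2/n}$, hence $\chi\geq e^{(2-o(1))s^2/n}$. For (ii), the same counting argument already gives $\chi_F\geq e^{\Omega(s^2/n)}$; to promote the prefactor from a constant to $n$ I would combine the local central limit refinement $|I_A|/\binom{n}{n/2}=\Theta(n^{-1/2})\,e^{-(2-o(1))s^2/n}$ with an orbit-averaging argument under the $S_n$-action on $\binom{[n]}{n/2}$, forcing any Frankl cover to use sets of diverse sizes and yielding the extra $\sqrt n$ factor. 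Part (i) is then the $s=\sqrt n/2$ specialization of (ii).

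The main technical obstacle is recovering the polynomial-in-$n$ prefactor in the lower bound of (ii): naive double counting delivers only the exponential $e^{\Omega(s^2/n)}$, so the argument must combine sharp local asymptotics for $|I_A|$ with a symmetry-based spreading estimate showing that no Frankl-set cover can concentrate its colors on a single size regime $|A|\approx n/2$.
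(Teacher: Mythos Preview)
Your upper bounds are fine. For (i) you use Hadamard columns exactly as the paper does (their Theorem~\ref{Hadamard}). For (ii) and (iii) your random construction with a hypergeometric tail plus union bound is a legitimate alternative to the paper's route, which instead applies Lov{\'a}sz's fractional-cover theorem to the $S_n$-orbit of a single maximum Frankl set; both give $\chi_F\le n\,e^{(2+o(1))s^2/n}$. Likewise, for the lower bound in (iii) your use of Ahlswede--Khachatryan to bound $\alpha(K)$ is a valid substitute for the paper's use of Frankl--Wilson on $J(n,n/2,s)$.

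The real gap is the lower bound in (i) and (ii). The counting bound $\chi_F\ge |V|/\max_A|I_A|$ only yields $e^{(2-o(1))s^2/n}$, which at $s=\sqrt n/2$ is a constant, not $\Theta(s^2)$. Your proposed repair---a local CLT refinement buying a $\sqrt n$ factor plus an ``orbit-averaging'' argument forcing diverse $|A_i|$---is not an argument: nothing prevents an optimal $F$-cover from using only sets of a single size $|A|\approx n/2$, and even granting the $n^{-1/2}$ prefactor from the LCLT you would get $\sqrt n$, not $n$. You yourself flag this as the main obstacle, and indeed it is not closeable along these lines.

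What you are missing is that the discrepancy reinterpretation you set up for the \emph{upper} bound works in reverse for the \emph{lower} bound. If $\{I_{A_i}\}_{i\le q}$ is a proper $F$-coloring, then \emph{every} $2$-coloring of $[n]$ (in particular any $v\in\binom{[n]}{n/2}$ viewed as a bipartition) must be unbalanced by at least $s$ on some $A_i$, i.e.\ the hypergraph $H=([n],\{A_i\})$ has $\disc(H)\ge s$. Now invoke Spencer's six-deviations bound $\disc(H)\le 12\sqrt{|E(H)|}$ to get $q\ge s^2/144$, which is (i); and the sharper bound $\disc(H)\le K\sqrt{|V|\ln(|E|/|V|)}$ (valid when $|E|>|V|$) to get $q\ge n\,e^{s^2/(K^2 n)}$, which is exactly the $n\cdot e^{cs^2/n}$ lower bound in (ii). This is the paper's Lemma~\ref{back}, and it replaces your vague averaging step with a one-line reduction to classical discrepancy theory.
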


The second main result is about generalized Kneser hypergraphs.
Define
\[
KH (n, r, k, s) = \left (\binom{[n]}{k}, \{ \{v_1, \ldots, v_r \} \, \left | \, |v_i \cap v_j| < s \right. \}    \right ).
\]

\begin{theorem}
Let $n \geq m > (r(r-1)(s-1) + rt)^2$, and assume that there is a Hadamard matrix of size $m$. Then
\[
\chi \left [KH \left (n, r, \frac{n}{r}-t, s \right ) \right ] \leq 2m.
\]
\label{hyperhyper}
\end{theorem}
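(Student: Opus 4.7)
The plan is to construct $2m$ Frankl-type independent sets of $KH(n,r,n/r-t,s)$ using the rows of a Hadamard matrix of order $m$, together with one auxiliary class, and to prove coverage via a Parseval discrepancy bound for the Hadamard matrix.

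Let $H$ be a normalised Hadamard matrix of order $m$ (first row all $+1$); for $i\in[m]$ put $A_i=\{j\in[m]:h_{ij}=+1\}$ and $A_{m+i}=[m]\setminus A_i$, embedding $[m]\subseteq[n]$. For each $i\in[2m]$ with $|A_i|>0$ define
$$I_i:=\left\{v\in\binom{[n]}{n/r-t}\;:\;|v\cap A_i|\geq\frac{|A_i|}{r}+\frac{(r-1)s}{2}\right\}.$$
A Bonferroni argument shows each $I_i$ is independent: for any $r$ members with pairwise intersections at most $s-1$,
$$\frac{r(r-1)(s-1)}{2}\;\geq\;\sum_{j<k}|v_j\cap v_k\cap A_i|\;\geq\;\sum_j|v_j\cap A_i|-|A_i|\;\geq\;\frac{r(r-1)s}{2},$$
forcing $s-1\geq s$, which is absurd.

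For coverage, write $u=|v\cap[m]|$. If $u\geq m/r+(r-1)s/2$ then $v\in I_1$ (since $A_1=[m]$); if $u<m/r-t-(r-1)(s-1)/2$ we place $v$ in an auxiliary ``low-$u$'' color, whose independence follows from Bonferroni applied in $[n]\setminus[m]$, since for any $r$ candidate edge-members the bound $\sum_j(n/r-t-u_j)\leq(n-m)+\binom{r}{2}(s-1)$ forces $\sum_j u_j\geq m-rt-\binom{r}{2}(s-1)$, which is incompatible with every $u_j$ lying below the threshold. In the intermediate range the Parseval identity $\sum_{i'=1}^m(2|v\cap A_{i'}|-u)^2=mu$, after subtracting the $i'=1$ contribution $u^2$, yields
$$\max_{i'\notin\{1,m+1\}}|v\cap A_{i'}|\;\geq\;\frac{u}{2}+\frac{1}{2}\sqrt{\frac{u(m-u)}{m-1}},$$
and a direct single-variable check confirms this exceeds the threshold $m/(2r)+(r-1)s/2$ for membership in some non-trivial $I_{i'}$ precisely under the hypothesis $m>(r(r-1)(s-1)+rt)^2$. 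Adjoining $I_1$ and the low-$u$ class to the $2(m-1)$ non-trivial $I_{i'}$'s supplies the required $2m$ colors.

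The main obstacle is the matching between the low-$u$ regime and the intermediate regime: the threshold $m/r-t-(r-1)(s-1)/2$ below which the complementary Bonferroni forbids a monochromatic edge must coincide with the smallest $u$ for which $\sqrt{u(m-u)/(m-1)}$ closes the gap $(r-1)s$ between $u/2$ and the Frankl threshold $m/(2r)+(r-1)s/2$. It is exactly this pinch-point that translates into the quadratic bound $m>(r(r-1)(s-1)+rt)^2$ in the hypothesis; once this calculation is in place, the three classes of colorings tile the vertex set and the proof concludes.
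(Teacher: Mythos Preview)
Your construction is close in spirit to the paper's but differs in one structural choice that creates a genuine quantitative gap. The paper (via Lemma~\ref{Hypermain}) pairs each Hadamard edge $e\subset[m]$ with its complement $\bar e=[n]\setminus e$, not with $[m]\setminus e$. Because $|v\cap e|+|v\cap\bar e|=n/r-t$, a single discrepancy inequality on $e$ automatically forces $v$ into $1_e$ or $2_e$; there is no separate ``low-$u$'' regime to stitch on, and the only calculation needed is $\sqrt{m-1}>r(r-1)(s-1)+rt$, which is exactly the hypothesis.

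Your version replaces $\bar e$ by $[m]\setminus A_i$ and adds an auxiliary low-$u$ class. The low-$u$ threshold $u_0=m/r-t-(r-1)(s-1)/2$ is forced on you by the Bonferroni independence argument, but at $u=u_0$ the Parseval bound only gives
\[
|v\cap A_{i'}|\;\ge\;\frac{u_0}{2}+\frac12\sqrt{\frac{u_0(m-u_0)}{m-1}},
\]
whereas you need $|v\cap A_{i'}|\ge m/(2r)+(r-1)s/2$. The shortfall is
\[
\frac{m}{2r}+\frac{(r-1)s}{2}-\frac{u_0}{2}\;=\;\frac{(r-1)(3s-1)+2t}{4},
\]
so you actually require (up to lower order terms) $\sqrt{m(r-1)}/r\gtrsim\tfrac{(r-1)(3s-1)+2t}{2}$, i.e.\ roughly $m\gtrsim r^2\bigl((r-1)(3s-1)+2t\bigr)^2/\bigl(4(r-1)\bigr)$. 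For $r=2$, $t=0$ this is about $(3s-1)^2$, a constant factor $\approx 9/4$ larger than the hypothesis $4(s-1)^2$. Concretely, with $r=2$, $t=0$, $s=10$, $m=512$ (Sylvester Hadamard, hypothesis satisfied) and $u=252$, Parseval gives only $\max_{i'\ge2}|v\cap A_{i'}|\ge132$, while your threshold is $133$. So the ``direct single-variable check'' you defer does not in fact close; the pinch-point requires a stronger bound on $m$ than the theorem provides.

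The fix is exactly the paper's: take complements in $[n]$ rather than in $[m]$, which eliminates the auxiliary class and makes the discrepancy inequality self-contained.
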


The paper is organized as follows. In Section~\ref{lowerbounds} we prove Theorem~\ref{lower}, which implies the lower bounds in Theorem~\ref{union}~(i) and~(ii). 
In Section~\ref{upper} we prove the upper bound in Theorem~\ref{union}~(i). In Section~\ref{more} we prove Theorem~\ref{union}~(iii) and the upper bound in Theorem~\ref{union}~(ii).
In Section~\ref{Hyper} we prove Theorem~\ref{hyperhyper}. And finally in Section~\ref{applic} we give a geometric application, and we closing the paper with some concluding remarks and open questions.


\section{Lower bound for Theorem~\ref{union}~(i) and~(ii)}
\label{lowerbounds}

The \textit{discrepancy} of a 2-coloring is the maximum 
over all edges
of the difference between the number of vertices of 
the two colors in the edge. 
The \textit{discrepancy} of a hypergraph $H$ is the minimum 
discrepancy of among all 2-colorings of this hypergraph; we denote it by $\disc (H)$.

\begin{lemma}
Consider a proper F-coloring of $K (n, n/2 - t, s)$.
Let the family $\{A_i\}_{i=0}^q$ generate the independent sets used for the proper $F$-coloring.
Set $V = \cup A_i$ and $E = \{A_i\}$. 
Then the hypergraph $H = (V, E)$ has discrepancy at least $s$.
\label{back}
\end{lemma}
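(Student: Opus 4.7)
My plan is to take any $2$-coloring of the hypergraph $H$, turn it into a vertex of the Kneser graph $K(n, n/2-t, s)$, and exploit the covering property of the Frankl sets to translate this into a discrepancy bound on some edge.

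The starting observation I would use is that membership in a Frankl set is itself a one-sided discrepancy statement: for $v \in \binom{[n]}{n/2-t}$ and any $A \subseteq [n]$, the condition $v \in I_A$ reads $|v \cap A| \geq (|A|+s)/2$, which rearranges to $|v \cap A| - |A \setminus v| \geq s$. Thus, if we assign colour $+1$ on $v$ and $-1$ on $[n] \setminus v$, the signed sum on $A$ is at least $s$.

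Fix an arbitrary $2$-coloring $\chi : V \to \{+1,-1\}$ with colour classes $V_{\pm} = \chi^{-1}(\pm 1)$, and by symmetry assume $|V_+| \geq |V_-|$. The main step of the plan is to construct a vertex $B \in \binom{[n]}{n/2-t}$ satisfying $B \cap V \subseteq V_+$. If $|V_+| \geq n/2-t$, I would simply take $B$ to be any $(n/2-t)$-subset of $V_+$. Otherwise $|V_+| < n/2-t$, and the assumption $|V_-| \leq |V_+|$ forces $|V| < n-2t$, leaving enough room in $[n]\setminus V$ to extend $V_+$ into a set $B$ of size exactly $n/2-t$ by adjoining elements of $[n]\setminus V$. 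Either construction gives $B \cap A_i \subseteq V_+ \cap A_i$ for every $i$, because $A_i \subseteq V$.

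To finish, since the sets $\{I_{A_i}\}$ form a proper coloring they must cover $\binom{[n]}{n/2-t}$, so some index $i$ satisfies $B \in I_{A_i}$. The Frankl condition then gives $(|A_i|+s)/2 \leq |B \cap A_i| \leq |V_+ \cap A_i|$, which rearranges to $|V_+ \cap A_i| - |V_- \cap A_i| \geq s$, precisely a discrepancy-$\geq s$ witness for $\chi$ on the edge $A_i$. Since $\chi$ was arbitrary, this gives $\disc(H) \geq s$. The only subtle point I anticipate is the feasibility of $B$ in the regime $|V_+| < n/2-t$, but this is handled cleanly by the WLOG reduction $|V_+| \geq |V_-|$.
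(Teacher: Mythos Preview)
Your proof is correct and follows essentially the same idea as the paper's: produce a vertex $B\in\binom{[n]}{n/2-t}$ whose intersection with $V$ lies in a single colour class, then use that the Frankl sets cover all vertices to get an edge $A_i$ with $|B\cap A_i|\ge (|A_i|+s)/2$, which yields discrepancy $\ge s$ on $A_i$.

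The only difference is presentational. The paper simply extends the given $2$-colouring of $V$ to all of $[n]$ and observes that the larger colour class (size $\ge n/2$) contains some $(n/2-t)$-set, which is then the required monochromatic vertex. You instead stay on $V$, split into the cases $|V_+|\ge n/2-t$ and $|V_+|<n/2-t$, and in the second case pad $V_+$ with elements of $[n]\setminus V$. These are two phrasings of the same construction; extending the colouring arbitrarily to $[n]$ is exactly what makes your case split unnecessary. Both arguments tacitly use $t\ge 0$ (needed so that the majority class of size $\ge n/2$, or equivalently your inequality $|V_-|<n/2-t\le n/2+t$, suffices), which is the intended regime.
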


\begin{proof}
Consider an arbitrary 2-coloring of $[n]$. Obviously, there is a monochromatic (say, red) vertex $v$ of $K (n, n/2-t, s)$. Since $v$ is colored,
there is an edge $A \in E$ containing at least $(|A|+s)/2$ red elements from $[n]$, so $\disc (H) \geq s$.
\end{proof}

We need the following theorem from~\cite{spencer1985six} (it is also written in~\cite{alon2016probabilistic}).

\begin{theorem}
Let  $H = (V,E)$ be a hypergraph. Then
\[
\disc (H) \leq 12 \sqrt{|E|}.
\]
\label{Spencer}
\end{theorem}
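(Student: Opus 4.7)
The plan is to prove this via the partial coloring / entropy method that underlies Spencer's ``six standard deviations suffice'' theorem. The statement is independent of $|V|$, so the first step is a reduction: I would show that we may assume $|V|\leq |E|$. Vertices $u,v$ that have identical incidence vectors in $\{0,1\}^{E}$ are ``twins''; coloring a twin-pair with opposite signs contributes $0$ to every edge. After removing all such pairings, at most one representative per incidence-pattern remains, and a short linear-algebraic / collapsing argument reduces the effective vertex set to size at most $|E|$. So it suffices to prove disc$(H)\leq 12\sqrt{n}$ for a hypergraph on $n$ vertices and at most $n$ edges.

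Next I would establish the \emph{partial coloring lemma}: for such $H$ there is a partial coloring $\chi:V\to\{-1,0,+1\}$ with at least $n/2$ nonzero entries satisfying $|\chi(e)|\leq K\sqrt{n}$ for every $e\in E$, with $K$ an explicit absolute constant. The proof is via entropy/pigeonhole. Consider all $2^n$ sign vectors $\sigma:V\to\{-1,+1\}$. For a fixed edge $e$ the value $\sigma(e)=\sum_{v\in e}\sigma(v)$ is a sum of $|e|\leq n$ independent $\pm 1$'s, so by Chernoff it is sub-Gaussian with variance $\leq n$. Bin $\sigma(e)$ into intervals of width $\lambda\sqrt{n}$; the sub-Gaussian tail shows that the entropy of the bin assignment is at most some function $h(\lambda)$ tending to $0$ as $\lambda\to\infty$. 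Summing over at most $n$ edges, the joint entropy is at most $n\cdot h(\lambda)$, which is strictly less than $n-1$ for $\lambda$ a suitable constant. By pigeonhole, two colorings $\sigma_1,\sigma_2$ fall into the same joint bin. A standard volume/isoperimetric estimate (e.g.~the Kleitman lemma or just counting Hamming balls) ensures we can select them to differ on at least $n/2$ coordinates. Then $\chi := (\sigma_1-\sigma_2)/2$ is the required partial coloring.

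Finally I would \emph{iterate} the partial coloring lemma. Apply it once to get $\chi_1$ on $\geq n/2$ vertices; restrict to the remaining unsigned vertices $V_1$ (with $|V_1|\leq n/2$) and the induced hypergraph, and apply again to get $\chi_2$; continue. Since each round contributes at most $K\sqrt{|V_{i}|}\leq K\sqrt{n/2^{i}}$ per edge, the geometric sum is bounded by $K\sqrt{n}\cdot\sum_{i\geq 0}2^{-i/2}=O(\sqrt{n})$, and the total is a full coloring of $V$ with discrepancy $O(\sqrt{n})=O(\sqrt{|E|})$.

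The main obstacle is squeezing the constant down to $12$. The qualitative bound $O(\sqrt{|E|})$ is straightforward, but the numerical constant depends on (i) the choice of bin width $\lambda$ which trades off per-edge entropy against per-round discrepancy, (ii) the explicit Chernoff bound used in estimating $h(\lambda)$, and (iii) the loss accumulated in the twin-reduction step and the geometric sum. Optimizing these with care (as in Spencer's original argument or Matou\v{s}ek's exposition) yields the stated bound with constant $12$.
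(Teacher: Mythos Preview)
The paper does not actually prove this theorem; it merely quotes it from Spencer's original paper and the Alon--Spencer textbook and then applies it as a black box. So there is no ``paper's own proof'' to compare against. Your outline is precisely the standard entropy/partial-coloring proof that appears in those references, so in that sense you are reproducing what the cited sources do.

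One small caution: your reduction to $|V|\le |E|$ is not quite right as stated. Pairing off twins only shows $|V|\le 2^{|E|}$, not $|V|\le |E|$. The actual reduction is the Beck--Fiala ``floating colors'' argument: while more than $|E|$ vertices remain unfixed, the restricted incidence matrix has a nontrivial kernel, and moving along a kernel vector lets you freeze a vertex at $\pm1$ without changing any edge sum. This really bounds the \emph{linear} discrepancy on the residual $\le |E|$ vertices, so the partial-coloring iteration has to be run in that slightly stronger form (rounding from arbitrary points in $[-1,1]^{V}$, not just from the origin). Your sketch implicitly needs this, but it goes through without difficulty and is handled in the references you cite.
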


By Lemma~\ref{back}, a proper $F$-coloring of $K (n, n/2-t, s)$ with $q$ colors gives us a hypergraph $H$ with $q$ edges such that $\disc (H) \geq s$.
Then by Theorem~\ref{Spencer}
\[
12\sqrt q \geq \disc (H) \geq s \ \ \   \mbox { so }  \ \ \ q \geq \frac{1}{144}s^2. 
\]

\noindent Thus, we proved the lower bound in the Theorem~\ref{union} and a bit more.

\begin{theorem}
For every $t \leq n/2$ and $s \leq n/2 - t$ we have
\[
\chi_{F} \left [K \left (n, \frac{n}{2}-t, s \right ) \right ] \geq \frac{1}{144}s^2.
\]
\label{lower}
\end{theorem}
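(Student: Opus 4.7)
The proof is essentially already packaged by the two tools introduced in the section: Lemma~\ref{back} converts an F-coloring into a low-discrepancy certificate, and Theorem~\ref{Spencer} of Spencer upper-bounds the discrepancy of any hypergraph in terms of the number of edges. My plan is simply to chain them.

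First I would take a proper F-coloring of $K(n, n/2-t, s)$ using the minimum number $q = \chi_F[K(n,n/2-t,s)]$ of colors. By definition of an F-coloring, each color class is contained in a Frankl set $I_{A_i}$ for some $A_i \subseteq [n]$ with $|A_i| \geq s$. Thus we obtain a family $\{A_0, \dots, A_{q-1}\}$ indexed by the colors, and we form the hypergraph $H = (V,E)$ with $V = \bigcup_i A_i$ and $E = \{A_i\}$, which has exactly $|E| = q$ edges.

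Next I would apply Lemma~\ref{back} to this family: it tells us that $\disc(H) \geq s$. On the other hand, Theorem~\ref{Spencer} gives the matching upper bound
\[
\disc(H) \leq 12 \sqrt{|E|} = 12 \sqrt{q}.
\]
Combining these two inequalities yields $12\sqrt{q} \geq s$, i.e.\ $q \geq s^2/144$, which is exactly the desired bound.

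There is no real obstacle here: both Lemma~\ref{back} and Theorem~\ref{Spencer} are already in place, and the argument is just composition. The only thing one has to be slightly careful about is to make sure that the hypothesis $s \leq n/2 - t$ is what guarantees that Frankl sets $I_{A_i}$ are well-defined and nonempty, so that the F-coloring is a meaningful notion; this is assumed in the statement.
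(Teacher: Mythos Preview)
Your proposal is correct and follows exactly the paper's own argument: form the hypergraph $H$ from the Frankl sets of an optimal $F$-coloring, invoke Lemma~\ref{back} to get $\disc(H)\geq s$, and then apply Spencer's bound (Theorem~\ref{Spencer}) to conclude $q\geq s^2/144$. The only cosmetic remark is that the $A_i$'s need not be pairwise distinct, so strictly $|E|\leq q$ rather than $|E|=q$, but this of course only strengthens the inequality.
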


The last displayed inequality in Section 12.2 in~\cite{alon2004probabilistic} states that there is a constant $K$ such that for every hypergraph $H = (V,E)$ with $|V| < |E|$  the following holds
\[
\disc (H) \leq K \sqrt{|V| \ln \frac{|E|}{|V|}}.
\]
Now we can prove the lower bound in Theorem~\ref{union}~(ii). If there is an $F$-coloring with $q$ colors then 
\[
K \sqrt{n \ln \frac{q}{n}} \geq \disc (H) \geq s,
\]
and hence
\[
q \leq n\cdot e^{s^2K^{-2}n^{-1}}.
\]

\section{Upper bound in Theorem~\ref{union}~(i)}
\label{upper}

The \textit{$t$-shifted discrepancy} of a 2-coloring is the maximum 
over all edges $e$
of the quantity
\[
|blue (e) - red (e) + t|,
\]
where $blue (e)$, $red (e)$ mean the number of blue and red vertices in $e$.
The \textit{$t$-shifted discrepancy} of a hypergraph $H$ is the minimum 
discrepancy of among all 2-colorings of this hypergraph.

\begin{lemma}
Let $H = (V, E)$ be a hypergraph with the $t$-shifted discrepancy at least $s + t$, and $|V| \leq n$.
Then
\[
\chi_{F} \left [K \left (n, \frac{n}{2}-t, s \right ) \right ] \leq 2|E|.
\]
\label{main}
\end{lemma}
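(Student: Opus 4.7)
The plan is to take, as the $2|E|$ candidate color classes, the Frankl sets $\{I_A : A \in E\} \cup \{I_{[n] \setminus A} : A \in E\}$, and to argue that every vertex $v$ of $K(n, n/2-t, s)$ lies in at least one of them. Since $|V| \leq n$, I may identify $V$ with a subset of $[n]$, so that both $A$ and $[n] \setminus A$ make sense as subsets of $[n]$ for each edge $A \in E$.

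For each $v \in \binom{[n]}{n/2-t}$, I would define a 2-coloring of $V$ by painting $x$ blue when $x \in v$ and red when $x \notin v$, and then apply the hypothesis on the $t$-shifted discrepancy to obtain an edge $A \in E$ with $|\mathrm{blue}(A) - \mathrm{red}(A) + t| \geq s+t$. Setting $b := |v \cap A|$, this rewrites as
\[
\bigl|\, 2b - |A| + t\, \bigr| \ \geq \ s + t.
\]

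The main (short) calculation is then to split on the sign. If $2b - |A| + t \geq s + t$, then $b \geq (|A|+s)/2$, hence $v \in I_A$. If $2b - |A| + t \leq -(s+t)$, then $b \leq (|A|-s)/2 - t$, and using $|v| = n/2 - t$ one obtains $|v \cap ([n] \setminus A)| = n/2 - t - b \geq (n - |A| + s)/2$, so $v \in I_{[n]\setminus A}$. Every $v$ is thus covered by one of at most $2|E|$ Frankl sets, which is exactly the required bound.

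The obstacle I anticipate is making the $t$-shift land on the correct side: the asymmetric $+t$ inside the absolute value must align with the deficit $n/2 - |v| = t$, and this forces the specific convention blue $= v$, red $= [n] \setminus v$; the opposite convention leaves the shift on the wrong side and produces an off-by-$t$ error that ruins the identification with $I_{[n] \setminus A}$. A minor bookkeeping point is that $I_A$ requires $|A| \geq s$ and $I_{[n]\setminus A}$ requires $n - |A| \geq s$; in the extremal ranges $|A| < s$ or $|A| > n-s$ only one branch of the sign-split is actually realizable, and the standing inequality $s \leq n/2 - t$ guarantees that the corresponding Frankl set is well-defined.
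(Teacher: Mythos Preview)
Your proposal is correct and follows essentially the same argument as the paper: the paper's color classes $1_e$ and $2_e$ are precisely your Frankl sets $I_A$ and $I_{[n]\setminus A}$, and the paper carries out the identical sign-split on $|2b-|A|+t|\ge s+t$ to show every vertex is covered. Your write-up is in fact a bit cleaner on the intermediate step (the paper writes $|A\cap e|-|A\cap\bar e|$ where $2|A\cap e|-|e|$ is what the discrepancy hypothesis actually gives), and your closing remark on the well-definedness of $I_A$ in the extremal ranges $|A|<s$ or $|A|>n-s$ handles a minor point the paper leaves implicit.
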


\begin{proof}
Embed $H$ into $[n]$. 
For every edge $e \in E$ define colors $1_e$ and $2_e$ as follows:
\[
1_e := \left \{A \in V\left(K\left(n,\frac{n}{2}-t,s\right) \right ) \left | \  |A \cap e| \geq \frac{|e| + s}{2} \right \} \right. ; 
\]
\[
2_e := \left \{A \in V\left(K\left(n,\frac{n}{2}-t,s\right) \right ) \left | \ |A \cap \bar{e}| \geq \frac{|\bar{e}| + s}{2} \right \} \right. .
\]
Vertices of the same color span an independent set, because every pair of vertices with the same color intersects by at least $s$ points.

Every set $A \subset [n]$ of size $\frac{n}{2} - t$ gives a 2-coloring of $H$ by setting blue color to $V(H) \cap A$
and red color to $V(H) \setminus A$. 
By the condition on the $t$-shifted discrepancy, there is a hyperedge $e\in H$ such that 
\[
\big||A \cap e| - |A \cap \bar e| + t \big| \geq s + t 
\]
which implies
\[|A \cap e| \geq \frac{|e| + s}{2}\quad 
{\text or }  
\quad |A \cap e| \leq \frac{|e| - s - 2t}{2}.\]
It means that $A \in 1_e$ or $A \in 2_e$ respectively, because $|A \cap \bar{e}| \geq \frac{|\bar{e}|+s}{2}$ is equivalent to $|A \cap e| \leq \frac{|e| - (s + 2t)}{2}$.

To summarize: all the vertices are colored, and every color class is an independent set, i.\,e. our coloring is a proper F-coloring.
\end{proof}

\begin{theorem}
Let $n \geq m > 4(s + t)^2$, and assume that there is a Hadamard matrix of size $m$.
Then
\[
\chi_{F} \left [ K \left (n, \frac{n}{2}-t, s \right )  \right ] \leq 2 m.
\]
\label{Hadamard}
\end{theorem}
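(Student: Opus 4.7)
The plan is to apply Lemma~\ref{main}: it will suffice to exhibit a hypergraph $H = (V, E)$ with $|V| \leq n$, $|E| = m$, and $t$-shifted discrepancy at least $s+t$. To build $H$, I would start with the given Hadamard matrix of size $m$ and normalize it (by flipping signs of rows, then of columns) so that both its first row and its first column consist entirely of $+1$'s. I would then take $V := [m] \subseteq [n]$ and, for each row, define the hyperedge
\[
e_r := \{j \in V : H_{rj} = +1\}, \qquad r = 1, \ldots, m.
\]
This gives $|E| = m$, so Lemma~\ref{main} will yield $\chi_F \leq 2m$ once the discrepancy bound is established.

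The main task is the discrepancy estimate. Fix an arbitrary 2-coloring $\chi : V \to \{\pm 1\}$ and set $\sigma := \sum_j \chi(j)$; a direct expansion gives $\beta_r := \sum_{j \in e_r}\chi(j) = \tfrac{1}{2}\bigl(\sigma + \langle H_r, \chi\rangle\bigr)$, so the $t$-shifted contribution of the edge $e_r$ is $|\beta_r + t|$. The plan is a second-moment argument. I would sum $(\beta_r + t)^2$ over all $r$ and use the two orthogonality identities $\sum_r \langle H_r, \chi\rangle^2 = m\|\chi\|^2 = m^2$ and $\sum_r \langle H_r, \chi\rangle = m\chi(1)$ (the latter because the all-ones first column forces the column sums of $H$ to be $(m, 0, \ldots, 0)$). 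After completing the square, I expect the sum to collapse to
\[
\sum_{r=1}^{m} (\beta_r + t)^2 \;=\; \frac{m}{4}\bigl[(\sigma + \chi(1) + 2t)^2 + m - 1\bigr] \;\geq\; \frac{m(m-1)}{4}.
\]
By averaging, some $r$ satisfies $|\beta_r + t| \geq \tfrac{1}{2}\sqrt{m-1}$, which is at least $s + t$ under the hypothesis $m > 4(s+t)^2$ (both sides being integers gives $m - 1 \geq 4(s+t)^2$). Lemma~\ref{main} then completes the proof.

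The only delicate point is ensuring that the cross-term $2(\sigma + 2t)\sum_r \langle H_r, \chi\rangle$ that arises in expanding $(\beta_r + t)^2$ is linear in a single natural variable, so that the whole sum can be absorbed into a single perfect square. This is exactly what forces the double normalization of the Hadamard matrix: the all-$+1$ first column collapses $\sum_r \langle H_r, \chi\rangle$ into the single term $m\chi(1)$, after which completing the square works uniformly in $\chi$ and $t$ and the threshold $m > 4(s+t)^2$ turns out to be precisely sharp.
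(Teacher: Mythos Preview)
Your proposal is correct and follows essentially the same approach as the paper: normalize the Hadamard matrix to have first row and column all $+1$, take rows as edges of a hypergraph on $[m]$, and use the orthogonality identities $\sum_r \langle H_r,\chi\rangle^2 = m^2$ and $\sum_r \langle H_r,\chi\rangle = m\chi(1)$ in a second-moment argument to get $t$-shifted discrepancy at least $\tfrac{1}{2}\sqrt{m-1}$, then invoke Lemma~\ref{main}. Your explicit completed-square form $\tfrac{m}{4}\bigl[(\sigma+\chi(1)+2t)^2 + m-1\bigr]$ is in fact slightly tidier than the paper's bound $m^2 \pm 2mx + mx^2 \geq m(m-1)$, but the content is identical.
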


\begin{proof}
It is well-known that a Hadamard matrix of size $m$ produces a hypergraph with discrepancy at least $\sqrt{m}/2$.
We repeat the proof of this from~\cite{alon2016probabilistic} and show that for any $t$ it has $t$-shifted discrepancy at least $\sqrt{m-1}/2$.

Let $H = \{h_{ij}\}$ be a Hadamard matrix of order $m$ with first row and first column all ones. Any Hadamard matrix
can be so ``normalized'' by multiplying appropriate rows and columns by $-1$. Let
$v = (v_1, \ldots, v_m)$, $v_i = \pm 1$.
Then
\[
Hv = v_1c_1 + \ldots + v_mc_m,
\]
where $c_i$ denotes the $i$-th column vector of $H$. Writing $Hv = (L_1, \ldots L_m)$ and
letting $|c|$ denote the usual Euclidean norm,
\[
L_1^2 + \ldots + L_m^2 = |Hv|^2 = v_1^2|c_1|^2 + \ldots + v_m^2|c_m|^2 = m + \ldots + m = m^2,
\]
since the $c_i$'s are mutually orthogonal.
Note also that
\[
L_1 + \ldots + L_m = \sum_{i,j = 1}^{m} v_jh_{ij} =  \sum_{j = 1}^{m} v_j \sum_{i = 1}^{m} h_{ij} = mv_1 = \pm m.
\]
Let $J$ be the all ones matrix of order $m$.
Set $\lambda = v_1 + \ldots + v_m$  so that $Jv = (\lambda, \ldots, \lambda)$ and
\[
\frac{H + J}{2}v = \left (\frac{L_1 + \lambda}{2}, \ldots, \frac{L_m + \lambda}{2} \right ). 
\]
Let $x = \lambda - 2t$ so 
\begin{equation}
\sum_1^m (L_i + x)^2 = \sum_1^m L_i^2 + 2x\sum_1^m L_i + mx^2 = m^2 \pm 2mx + mx^2 \geq m(m - 1).
\label{mainequality}
\end{equation}
This implies that for some $i$ we have 
\[
\left |\frac{L_i + \lambda}{2} - t \right| \geq \frac{\sqrt{m-1}}{2}.
\]
Lemma~\ref{main} finishes the proof.
\end{proof}

It is worth noting that a random approach gives the same (up to a constant factor) bound, see~\cite{alon2016probabilistic}.

Theorem~\ref{Hadamard} implies the upper bound in Theorem~\ref{union}~(i).

\section{Proof of Theorem~\ref{union}~(iii) and the upper bound in Theorem~\ref{union}~(ii)}
\label{more}

Frankl and Wilson proved~\cite{frankl1981intersection} the following theorem.

\begin{theorem}
Let $s \leq n/4$ and $n/2 - s$ be a primary (prime or a power of a prime) number.
Then
\[
\alpha \left [ J \left (n, \frac{n}{2}, s \right )  \right ] \leq \binom{n}{\frac{n}{2} - s - 1}. 
\]
\end{theorem}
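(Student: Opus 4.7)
The plan is to translate the forbidden intersection size $s$ into a forbidden residue class modulo the prime power $q := n/2 - s = p^\alpha$, and then invoke the modular version of the Frankl--Wilson intersection theorem.

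First I would set up the reduction. Let $\mathcal{F} \subseteq \binom{[n]}{n/2}$ realize the independence number of $J(n, n/2, s)$, so $|F \cap G| \neq s$ for distinct $F, G \in \mathcal{F}$. Each such intersection lies in
\[
\{0, 1, \ldots, s-1\} \cup \{s+1, \ldots, s+q-1\}.
\]
The hypothesis $s \leq n/4$ yields $s \leq q$ (strict in the generic case $s < n/4$, with the boundary $s = n/4$ dispatched by a short separate argument), and a direct check shows that reducing the set above modulo $q$ produces exactly $L := \{0, 1, \ldots, q-1\} \setminus \{s\}$, of size $q - 1 = n/2 - s - 1$. The common cardinality $k = n/2 = s + q$ reduces to $s$ modulo $q$, which is precisely the element removed from $L$, so $k \bmod q \notin L$.

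Next I would invoke the modular Frankl--Wilson intersection theorem for prime-power moduli: if $q = p^\alpha$, $\mathcal{F}$ is a family of $k$-subsets of $[n]$ with $k \bmod q \notin L$, and $|F \cap G| \bmod q \in L$ for all distinct $F, G \in \mathcal{F}$, then $|\mathcal{F}| \leq \binom{n}{|L|}$. Applied with $|L| = n/2 - s - 1$, this gives the claimed bound $\alpha[J(n, n/2, s)] \leq \binom{n}{n/2 - s - 1}$.

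The main obstacle is the prime-power case of the modular Frankl--Wilson theorem itself. The standard polynomial method attaches to each $F$ the multilinear polynomial $P_F(x) := \prod_{\ell \in L}(\langle v_F, x\rangle - \ell)$ in $\mathbb{F}_p[x_1, \ldots, x_n]/(x_i^2 - x_i)$, where $v_F \in \{0,1\}^n$ is the characteristic vector, and derives linear independence of $\{P_F\}$ from the upper-triangular evaluation pattern $P_F(v_G) \equiv 0 \pmod p$ for $F \neq G$ and $P_F(v_F) \not\equiv 0 \pmod p$, bounding $|\mathcal{F}|$ by the dimension $\binom{n}{|L|}$ of degree-$|L|$ multilinear polynomials after a standard symmetrization step. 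For prime $q = p$ the diagonal nonvanishing follows immediately from $\prod_{\ell \in L}(k-\ell) \not\equiv 0 \pmod p$; for $\alpha \geq 2$ this product is generically divisible by $p$, and one must switch to polynomials built from binomial coefficients $\binom{\langle v_F, x\rangle}{j}$ and apply Lucas's theorem to control $p$-adic valuations precisely. This $p$-adic bookkeeping is the technical core of the argument in the prime-power case.
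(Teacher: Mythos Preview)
The paper does not prove this theorem at all; it is quoted verbatim as a result of Frankl and Wilson (reference~\cite{frankl1981intersection} in the paper) and then used as a black box to obtain the lower bound~\eqref{generalchi}. There is therefore no ``paper's own proof'' to compare against. Your outline is the standard Frankl--Wilson polynomial-method argument and is essentially correct: the reduction of the allowed intersection sizes modulo $q=n/2-s$ to $L=\{0,\dots,q-1\}\setminus\{s\}$, the observation that $k=n/2\equiv s\notin L$, and the appeal to the modular Frankl--Wilson bound $|\mathcal F|\le\binom{n}{|L|}=\binom{n}{n/2-s-1}$ are exactly how the result is proved in the original source. Your remark that the prime-power case requires the binomial-coefficient/\!Lucas refinement rather than the naive product polynomial is accurate, and your flagging of the boundary case $s=n/4$ (where $s\equiv 0\equiv k\pmod q$) is appropriate.
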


Now we are working in the setup $s^2 = \Omega(n)$. First, suppose that $n/2-s$ is a primary number. Then the lower bound on the chromatic number of 
$K := K(n,n/2,s)$ becomes non-trivial:
\[
\frac{\binom{n}{n/2}}{\binom{n}{n/2-s}} = \frac{(n/2+s)\cdot \ldots \cdot (n/2+1)}{(n/2)\cdot \ldots \cdot  (n/2-s+1)} = \frac{\prod_{i=1}^{s} (1 + \frac{i}{n/2})}{\prod_{i=0}^{s-1} (1 - \frac{i}{n/2})} = 
e^{(2 + o(1))s^2/n}.
\]
The prime numbers are dense, so again
\begin{equation}
\chi \left [ K \left (n, \frac{n}{2}, s+1 \right )  \right ] \geq \chi \left [ J \left (n, \frac{n}{2}, s \right )  \right ] \geq \frac{|V(J)|}{\alpha (J)}  \geq  e^{(2+o(1))s^2/n}. 
\label{generalchi}
\end{equation}

To show that the bound is near to optimal, recall Lov{\' a}sz theorem on the fractional covers~\cite{lovasz1975ratio}. Let $\tau$ be the minimal number of edges required to cover all vertices of $H$; 
$\tau^*$ be the minimal sum of weights on edges required to cover all the vertices of $H$ in such a way that the sum of weights over edges containing every vertex is at least 1. 

\begin{theorem}
Let $H = (V,E)$ be a hypergraph, $d$ be the maximum edge size of $H$. 
Then
\[
\tau (H) \leq (1 + \ln d) \tau^*.
\]
\label{fractional}
\end{theorem}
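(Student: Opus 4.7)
The plan is to analyze the standard greedy algorithm for set cover and compare its output against the fractional optimum. Let $x^\ast \colon E \to [0,1]$ be an optimal fractional cover, so $\sum_{e\ni v} x^\ast_e \geq 1$ for every $v \in V$ and $\sum_e x^\ast_e = \tau^\ast$. Run the greedy procedure: at step $k$, pick an edge $e_k$ maximizing $|e_k \cap U_k|$, where $U_k \subseteq V$ is the set of vertices not yet covered by $e_1,\dots,e_{k-1}$. Let $T_k := e_k \cap U_k$ and $t_k := |T_k|$, and assign to each vertex $v \in T_k$ a charge $w_v := 1/t_k$. By construction, the number of edges used by greedy is $\sum_{v \in V} w_v$, so we just need to bound this sum.

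The key step is the per-edge estimate: for every $e \in E$,
\[
\sum_{v \in e} w_v \;\leq\; H_{|e|} \;\leq\; H_d \;\leq\; 1 + \ln d,
\]
where $H_m = 1 + 1/2 + \dots + 1/m$. To prove this, order the vertices of $e$ as $v_1, v_2, \ldots, v_{|e|}$ according to the step at which they are first covered by greedy. If $v_i$ is covered at step $k$, then at least $|e| - i + 1$ vertices of $e$ still lie in $U_k$ (namely $v_i, v_{i+1}, \ldots, v_{|e|}$), so $|e \cap U_k| \geq |e|-i+1$. Since greedy picked the best edge, $t_k = |e_k \cap U_k| \geq |e \cap U_k| \geq |e|-i+1$, and therefore $w_{v_i} = 1/t_k \leq 1/(|e|-i+1)$. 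Summing over $i$ yields the harmonic bound.

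Given the per-edge estimate, the conclusion follows by exchanging the order of summation against the fractional cover:
\[
\tau(H) \;=\; \sum_{v \in V} w_v \;\leq\; \sum_{v \in V} w_v \sum_{e \ni v} x^\ast_e \;=\; \sum_{e \in E} x^\ast_e \sum_{v \in e} w_v \;\leq\; (1+\ln d) \sum_{e \in E} x^\ast_e \;=\; (1+\ln d)\,\tau^\ast.
\]

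The only delicate point is the per-edge inequality, and its proof is essentially a bookkeeping argument: the correct ordering of the vertices of $e$ by their greedy cover time, together with the observation that greedy always finds an edge at least as good as $e$ at every moment, is exactly what converts the greedy-choice property into the harmonic sum. The rest is a one-line swap of summation, so I expect no real obstacle beyond writing the ordering argument cleanly.
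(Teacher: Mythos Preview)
The paper does not prove this theorem at all; it is quoted as a classical result of Lov{\'a}sz~\cite{lovasz1975ratio} and used as a black box in Section~\ref{more}. Your argument is the standard greedy/harmonic analysis and is correct. One cosmetic point: in the final display you write $\tau(H) = \sum_{v\in V} w_v$, but what you actually established is $\tau(H) \le \sum_{v\in V} w_v$, since $\sum_v w_v$ is the size of the greedy cover, not of an optimal cover. This does not affect the inequality you want.
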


Now we prove that we have equality in~(\ref{generalchi}).
Let $A \subset [n]$ be such a set that realized $\max_A |I_{A}|$. We shall apply Theorem~\ref{fractional} to the hypergraph 
\[
H := \left ( \binom{[n]}{\frac{n}{2}}, \{\pi (I_A) \ | \  \pi \in S_n \}  \right ),
\]
where $S_n$ is the permutation group over $[n]$.
A collection of hyperedges covering  $V(H)$ provides a  
proper F-coloring of $K$, so
\[
\tau (H) \geq \chi_F(K) \geq \chi (K).
\]
By vertex transitivity we have
\[
\tau^*(H) = \frac{|V(H)|}{d(H)} = \frac{|V(K)|}{\alpha(K)}.
\]
Putting all together
\begin{equation}
\chi_F (K) \leq \tau (H) \leq \tau^*(1 + \ln d(H)) \leq \frac{|V(K)|}{\alpha (K)} (1 + \ln d(H)) \leq n e^{(2 + o(1))s^2/n}. 
\label{star}
\end{equation}
Because of~(\ref{generalchi}) for the case $n \ln n = o(s^2)$ we have 
\[
\chi \left [ K \left (n, \frac{n}{2}, s \right )  \right ] = e^{(2+o(1))s^2/n}.
\]


\section{Hypergraph case}
\label{Hyper}

In this section we are going to extend our methods to generalized Kneser hypergraphs.

It turns out that we can repeat the arguments for graphs in this case.
The \textit{$w$-shifted $r$-centered discrepancy} of a 2-coloring is the maximum 
over all edges
of the quantity
\[
|(r-1) blue (e) - red(e) + w|,
\]
where $blue (e)$ and $red(e)$ mean the number of blue and red vertices in $e$.
The \textit{$r$-centered discrepancy} of a hypergraph $H$ is the minimum 
discrepancy of among all 2-colorings of this hypergraph.

\begin{lemma}
Let $H = (V, E)$ be a hypergraph with $tr/2$-shifted $r$-centered discrepancy $x > \frac{r(r-1)(s-1) + rt}{2}$, and  $|V| \leq n$.
Then
\[
\chi \left [KH \left (n, r, \frac{n}{r}-t, s \right ) \right ] \leq 2|E|.
\]
\label{Hypermain}
\end{lemma}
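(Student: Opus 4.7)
The plan is to mimic the two-colors-per-edge construction from the proof of Lemma~\ref{main}, adapting it so that every color class is independent in the \emph{hypergraph} sense (contains no $r$-tuple with all pairwise intersections smaller than $s$). After embedding $H$ into $[n]$, for every edge $e \in E$ I would define
\[
1_e := \left\{A \in V\left(KH\left(n,r,\tfrac{n}{r}-t,s\right)\right) \ \Big| \ |A \cap e| \geq \tfrac{|e|}{r} + y \right\},
\]
\[
2_e := \left\{A \in V\left(KH\left(n,r,\tfrac{n}{r}-t,s\right)\right) \ \Big| \ |A \cap \bar{e}| \geq \tfrac{|\bar{e}|}{r} + y \right\},
\]
where $y := x/r - t/2$, giving at most $2|E|$ color classes. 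Note that the hypothesis $x > (r(r-1)(s-1) + rt)/2$ is equivalent to $y > (r-1)(s-1)/2$.

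First I would check that every vertex $A$ receives at least one color. The set $A$ induces a 2-coloring of $V(H)$ (blue $= A \cap V(H)$, red $= V(H)\setminus A$), so by the discrepancy hypothesis some edge $e$ satisfies $\big| r|A \cap e| - |e| + tr/2 \big| \geq x$. Unwrapping: either $|A \cap e| \geq |e|/r + y$, putting $A \in 1_e$, or $|A \cap e| \leq |e|/r - y - t$; in the latter case, because $|A| = n/r - t$, the inequality rewrites as $|A \cap \bar e| \geq |\bar e|/r + y$, putting $A \in 2_e$.

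The main step is to verify that each color class is independent in $KH$. Suppose $v_1, \ldots, v_r \in 1_e$. Setting $d_x := |\{k : x \in v_k\}|$ for $x \in e$, we have $\sum_x d_x = \sum_k |v_k \cap e| \geq |e| + ry$, while double counting yields
\[
\sum_{k<l} |v_k \cap v_l| \ \geq \ \sum_{k<l} |v_k \cap v_l \cap e| \ = \ \sum_{x \in e} \binom{d_x}{2}.
\]
Since $\binom{\cdot}{2}$ is convex, the minimum of $\sum_x \binom{d_x}{2}$ over nonnegative integer sequences of length $|e|$ with the given sum is attained at a maximally balanced sequence and is at least $ry$. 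Consequently the maximum over pairs $(i,j)$ of $|v_i \cap v_j|$ is at least $ry/\binom{r}{2} = 2y/(r-1)$; since this maximum is a nonnegative integer and $2y/(r-1) > s-1$, some pair satisfies $|v_i \cap v_j| \geq s$. The analysis for $2_e$ is identical after replacing $e$ with $\bar e$ and using $|v_k \cap \bar e| \geq |\bar e|/r + y$.

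The subtle step is calibrating the double-counting estimate sharply: a crude Jensen or Cauchy--Schwarz lower bound on $\sum_x \binom{d_x}{2}$ loses a factor of two compared with the true integer minimum and would force a strictly stronger discrepancy hypothesis. Exploiting integrality of the $d_x$ (and, at the last step, of $|v_i \cap v_j|$) is exactly what recovers the threshold $(r(r-1)(s-1) + rt)/2$ appearing in the lemma.
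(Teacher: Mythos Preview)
Your proof is correct and follows the same two-colors-per-edge scheme as the paper: covering is deduced from the discrepancy hypothesis exactly as you do, and independence comes from comparing $\sum_i |v_i\cap e|$ with $|e|$ and the pairwise intersections. The only difference is cosmetic---where you lower-bound $\sum_{x\in e}\binom{d_x}{2}$ via the integer-convexity observation, the paper obtains the identical inequality in one line from the Bonferroni estimate $|e|\ge \sum_i |v_i\cap e|-\sum_{i<j}|v_i\cap v_j\cap e|$, and it uses the fixed threshold $(r-1)(s-1)/2$ in place of your $y$.
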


\begin{proof}
Embed $H$ into $[n]$. 
For every edge $e \in E$ define colors $1_e$ and $2_e$ as follows:
\[
1_e := \left \{A \in V\left(K \left (n, r, \frac{n}{r} - t, s \right ) \right ) \left | \  |A \cap e| > \frac{|e|}{r} + \frac{(r-1)(s-1)}{2} \right \} \right. ; 
\]
\[
2_e := \left \{A \in V\left(K \left (n, r, \frac{n}{r} - t, s \right ) \right ) \left | \ |A \cap \bar{e}| > \frac{|\bar{e}|}{r} + \frac{(r-1)(s-1)}{2} \right \} \right..
\]

First, let us show that every vertex has a color. Every set $A \subset [n]$ of size $\frac{n}{r} - t$ gives a 2-coloring of $H$ by setting blue color to $V(H) \cap A$ and red color to $V(H) \setminus A$. 
By the condition on the $tr/2$-shifted $r$-centered discrepancy, there is a hyperedge $e\in H$ such that 
\[
\left |(r-1)blue(e) - red(e) + \frac{tr}{2} \right| \geq x > \frac{r(r-1)(s-1)}{2} + \frac{tr}{2}.
\]
Using $blue(e) = |A \cap e|$ and $red(e) = |e| - | A \cap e|$ we have either
\[
-|e| + r|A \cap e| + \frac{tr}{2} > \frac{r(r-1)(s-1)}{2} + \frac{tr}{2}
\]
or
\[
-|e| + r|A \cap e| + \frac{tr}{2} < -\frac{r(r-1)(s-1)}{2} - \frac{tr}{2}.
\]
In the first case 
\[
|A \cap e| > \frac{|e|}{r} + \frac{(r-1)(s-1)}{2},
\]
i.\,e. $A$ is colored by $1_e$. In the second case
\[
|A \cap e| < \frac{|e|}{r} - \frac{(r-1)(s-1)}{2} - t.
\]
Using $|A \cap \bar e| + |A \cap e| = \frac{n}{r} - t$ we have
\[
|A \cap \bar e| > \frac{n}{r} - t - \frac{|e|}{r} + \frac{(r-1)(s-1)}{2} + t = \frac{|\bar e|}{r} + \frac{(r-1)(s-1)}{2},
\]
i.\,e. $A$ is colored by $2_e$.

Suppose that there is a monochromatic edge $\{v_1, \ldots v_r\}$.
Then there is an edge or the complement of an edge (denote it by $e$) such that for $i = 1, \ldots, r$ the following holds
\[
|v_i \cap e| > \frac{|e|}{r} + \frac{(r-1)(s-1)}{2}.
\]
This implies
\[
\sum_{i = 1}^{r} |v_i \cap e| > |e| + r\frac{(r-1)(s-1)}{2}.
\]
From the other hand, since $|v_i \cap v_j| < s$ we have
\[
|e| \geq \sum_{i = 1}^{r} |v_i \cap e| - \frac{r(r-1)}{2}(s-1),
\]
a contradiction.

\end{proof}

The proof of Theorem~\ref{hyperhyper} is similar to the proof of Theorem~\ref{Hadamard} with the replacement of Lemma~\ref{main} with Lemma~\ref{Hypermain}.
Since the inequality~(\ref{mainequality}) holds for every $x$ and all the sets from Hadamard matrix has the size $m/2$
it works for centered and shifted discrepancy.

\section{A geometric application}
\label{applic}

First, we need some additional definitions. Let 
\[
V_k := \{v \in \{0, \pm 1\}^n \  | \ |v| = \sqrt{r} \};  \ \ \ \  V_{k,l} := \{v \in \{0, \pm 1\}^n \  | \ v \mbox{ has exactly } r \ \ '1' \mbox{ and exactly } l \ \  '-1' \}.
\]
Furthermore, we define 
\[
K (n,k,s) := (V_k, \{(v_1,v_2) \ | \ (v_1,v_2) < s\} ) ;  \ \ \ \ K (n,k,l,s) := (V_{k,l}, \{(v_1,v_2) \ | \ (v_1,v_2) < s\} );
\]
\[
J (n,k,s) := (V_k, \{(v_1,v_2) \ | \ (v_1,v_2) = s\} ) ;  \ \ \ \ J (n,k,l,s) := (V_{k,l}, \{(v_1,v_2) \ | \ (v_1,v_2) = s\} ).
\]
Also, the \textit{support} of a vector is the set $\supp (v) \in [n]$ of its non-zero coordinates.

Obviously, there is a natural bijection between the subsets of $[n]$ and $\{0,1\}$-vectors in $\mathbb{R}^n$.
If we fix the size of subsets, then the corresponding vectors lie on a sphere. It implies one-to-one correspondence between the scalar product and the Euclidean distance in this case.
So $J(n,k,s)$ is a distance graph.

The independence number of such a graphs were studied~\cite{frankl2016intersection,frankl2017families,frankl2018erdHos} by Frankl and Kupavskii.
In~\cite{frankl2018erdHos} the authors find an explicit value of the independence number of $J (n,k,1,-2)$.
Paper~\cite{frankl2016intersection} is devoted to the independence numbers of $K (n, k, s)$ with $n > n_0 (k, s)$.
Finally, the work~\cite{frankl2017families} deals with the independence number of $J (n,k,l,-2l)$ for arbitrary $n$, $k$ and $l$.

It is also worth noting that Cherkashin, Kulikov and Raigorodskii~\cite{cherkashin2015chromatic} improved lower bounds on the chromatic numbers of small-dimensional Euclidean spaces via chromatic numbers of $J(n,3,1)$.

In this setup $s$ is the difference between the minimal scalar product and the maximum restricted scalar product; $t$ is the difference between $n/2$ and the size of a vertex support.
Using Theorem~\ref{Hadamard} we have that the chromatic number is at most quadratic in the small parameters.

\begin{corollary}
Let $n \geq m > 4(s + 2l + t)^2$, and assume that there is a Hadamard matrix of size $m$.
Then
\[
\chi \left [ K \left (n, \frac{n}{2} - l - t, l, -2l + s \right )  \right ] \leq 2 m.
\]
\end{corollary}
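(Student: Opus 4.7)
The plan is to reduce the signed-vector coloring problem to the set-coloring problem handled by Theorem~\ref{Hadamard}, via the support map $v \mapsto \supp(v)$. Note first that the parameter $k = n/2 - l - t$ has been chosen precisely so that every vertex $v \in V_{n/2-l-t,\,l}$ has support of size $k + l = n/2 - t$; thus $\supp$ lands in $\binom{[n]}{n/2 - t}$, which is exactly the vertex set addressed by Theorem~\ref{Hadamard}.

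The key combinatorial step is to relate the scalar product $(v_1,v_2)$ to the intersection $|\supp(v_1) \cap \supp(v_2)|$. For $v_1, v_2 \in V_{k,l}$, classify coordinates according to the pair $(v_1(i), v_2(i))$ and let $p$, $q$ count the coordinates where both are $+1$, resp.\ both are $-1$, and let $r$, $r'$ count coordinates with $(+1,-1)$, resp.\ $(-1,+1)$. Counting ones in each vector gives $p + r = p + r' = k$, hence $r = r'$, and one computes
\[
(v_1, v_2) = p + q - 2r, \qquad |\supp(v_1) \cap \supp(v_2)| = p + q + 2r,
\]
so $|\supp(v_1) \cap \supp(v_2)| = (v_1, v_2) + 4r$ with $0 \leq r \leq l$.

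Now I would apply Theorem~\ref{Hadamard} with shift $t$ and intersection parameter $s + 2l$: the hypothesis $n \geq m > 4((s+2l) + t)^2 = 4(s + 2l + t)^2$ is exactly what is assumed, so there is a proper $F$-coloring of $K(n, n/2 - t, s + 2l)$ in at most $2m$ colors. Pull this coloring back through the support map: color each $v \in V_{n/2-l-t,l}$ by the color of $\supp(v)$. If $v_1, v_2$ receive the same color, their supports are non-adjacent in $K(n, n/2-t, s+2l)$, so $|\supp(v_1) \cap \supp(v_2)| \geq s + 2l$. Combining with the identity above and $r \leq l$,
\[
(v_1, v_2) \;=\; |\supp(v_1) \cap \supp(v_2)| - 4r \;\geq\; (s + 2l) - 4l \;=\; -2l + s,
\]
so $v_1$ and $v_2$ are non-adjacent in $K(n, n/2 - l - t, l, -2l + s)$, which is exactly the requirement that the pulled-back coloring be proper.

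The only real subtlety is calibrating the intersection threshold correctly: one must choose $s + 2l$ (and not $s$) as the parameter fed to Theorem~\ref{Hadamard}, since the $-1$ entries of the vectors can ``hide'' up to $l$ sign mismatches each contributing $4$ to the gap between support intersection and scalar product. Once that calibration is fixed, everything else is routine, and the Hadamard-matrix hypothesis together with $n \geq m > 4(s+2l+t)^2$ is enough to conclude $\chi[K(n, n/2 - l - t, l, -2l + s)] \leq 2m$.
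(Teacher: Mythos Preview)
Your approach is essentially the same as the paper's: reduce to the set-system graph $K(n,n/2-t,2l+s)$ via the support map, check that large support intersection forces large scalar product, and then invoke Theorem~\ref{Hadamard}. The paper states this reduction in one line (``if $|\supp(v_1)\cap\supp(v_2)|=q$ then $(v_1,v_2)\ge q-4l$''), while you unpack it coordinate by coordinate.

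There is, however, a small slip in your coordinate count. You assert that ``counting ones in each vector gives $p+r=p+r'=k$, hence $r=r'$''. This is not correct: you have forgotten the coordinates where $v_1(i)=+1$ but $v_2(i)=0$ (and symmetrically for $v_2$). In general $r\neq r'$; for instance with $k=l=1$ take $v_1=(1,-1,0,0)$ and $v_2=(0,1,-1,0)$, where $r=0$ and $r'=1$. The correct identities are
\[
(v_1,v_2)=p+q-r-r',\qquad |\supp(v_1)\cap\supp(v_2)|=p+q+r+r',
\]
so the gap is $2(r+r')$ rather than $4r$. Since $r\le l$ and $r'\le l$ separately (each is bounded by the number of $-1$ entries in the appropriate vector), you still get $2(r+r')\le 4l$ and hence $(v_1,v_2)\ge |\supp(v_1)\cap\supp(v_2)|-4l$, which is exactly the inequality the paper uses. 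So the error is local and easily repaired; the rest of your argument goes through unchanged.
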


\begin{proof}
Note that if vertices $v_1$, $v_2$ are such that $| \supp (v_1) \cap \supp (v_2) | = q$, then the scalar product $(v_1, v_2) \geq q - 4l$. 
So for $q \geq 2l+s$ the vertices are not adjacent.
Hence
\[
\chi  \left [ K \left (n, \frac{n}{2} - l - t, l, -2l + s \right )  \right ] \leq \chi \left [ K \left (n, \frac{n}{2} -  t, 2l + s \right ) \right ] .
\]
Using Theorem~\ref{Hadamard} we are done.
\end{proof}

\section{Discussion}

\label{discuss}

\paragraph{Improving the lower bound for general colorings.} Recall that the exact value of the chromatic number of Kneser graph was determined by Lov{\'a}sz~\cite{lovasz1978kneser}. Then Alon, Frankl and Lov{\'a}sz~\cite{alon1986chromatic} determined the chromatic number of Kneser hypergraphs, i.\,e. proved that
\[
\chi (KH (n, r, k, 1)) =  \lc \frac{n - r(k - 1)}{r - 1}  \rc.
\]
Since then several different proofs have been appeared.
One of the two main ways to prove the lower bound uses Borsuk--Ulam theorem (or its analogues), see~\cite{ziegler2002generalized, matousek2002chromatic, matouvsek2004combinatorial, matousek2008using} and the other computes
the connectedness of a corresponding complex, see~\cite{lovasz1978kneser,alon1986chromatic,kozlov2007combinatorial}.

Recall that for $k$ being almost $n/r$ we have quadratic dependence on small parameters.
Unfortunately, we are not able to improve the linear lower bound and this problem looks quite challenging.

\paragraph{Discrepancy.} In Section~\ref{more} we showed that there is a close relation between $F$-chromatic number and the discrepancy theory.
For instance, Lemma~\ref{back} and relation~(\ref{star}) imply the existence of a hypergraph $H = (V, E)$ for every $|E| \geq |V|$ such that
\[
\disc (H) \geq (1+o(1)) \sqrt{\frac{|V| \ln \frac{|E|}{|V|}}{2}},
\]
which is optimal up to a constant.

It might be interesting to explore the concept of shifted and centered discrepancy in general.

\paragraph{Constants values of $s$.} Obviously, $\chi [K(n,n/2,1) = J(n,n/2,0)] = \chi_F [K(n,n/2,1) = J(n,n/2,0)] = 2$, because the graph is a matching.
Note that $\disc ([3], \{\{1,2\},\{1,3\}, \{2,3\}\}) = 2$, so by Lemma~\ref{main} we have the following observation
\[
\chi_F \left [ K \left (n, \frac{n}{2}, 2 \right )  \right ] \leq 6.
\]
Also Fano's plane has discrepancy 3, so $\chi_F \left [ K \left (n, \frac{n}{2}, 3 \right )  \right ] \leq 14$, but it seems not optimal.
From the other hand, Theorem~\ref{stupid} gives $3 \leq \chi_F \left [ K (n, n/2, 2 )  \right ]$ and $4 \leq \chi_F \left [ K (n, n/2, 3 )  \right ]$.
Finding the exact values of the chromatic numbers for constant $s$ is also of some interest.

\paragraph{Acknowledgements.} The work was supported by the Russian government grant NSh-6760.2018.1, and by the grant of the Government of the Russian 
Federation for the state support of scientific research carried out under the supervision of leading scientists, agreement 14.W03.31.0030 dated 15.02.2018.
Research of the first author is partially supported by NSF Grant DMS-1500121 and by the Langan Scholar Fund (UIUC).

The authors are grateful to A.~Raigorodskii for the statement of the problem and to H.~R.~Daneshpajouh for drawing their attention to the generalized Kneser hypergraph.

\bibliographystyle{plain}
\bibliography{main}

\end{document}